\gdef\firstpage{1}
\def\year{}
\def\frsthdr{}
\def\firstpageone{0\thepage}
\def\firstpagetwo{00\thepage}
\def\firstpagethree{000\thepage}
\def\firstpagemark{\ifnum\firstpage <10  \firstpageone
\else\ifnum\firstpage<100 \firstpagetwo \else \ifnum\firstpage
<1000 \firstpagethree \else \firstpageone\fi\fi\fi}
\def\footline{\ifnum\thepage=\firstpage \footlineone
\else\footlinetwo\fi}
\def\footlineone{\noindent \footnotesize \sf  \ \year \hspace{\fill}  \hbox{}}
\def\footlinetwo{ }
\def\titles{{Cohomology of the conformal algebra $Cend_{1,x}$}}
\def\authors{{Roman A.~Kozlov}}
\def\oddhedr{\ifnum\thepage=\firstpage \firsthdr \else \odhdr \fi}
\def\firsthdr{\hspace{\fill} \sl \frsthdr \hspace{\fill}\hbox{}}
\def\odhdr{\hspace{\fill}\sl\rightmark \titles \hspace{\fill}
\rm \thepage}
\def\evnhedr{\ifnum\thepage=\firstpage \firsthdr \else \evhdr \fi}
\def\evhdr{\noindent \rm \thepage\hspace*{\fill} \sl\leftmark
\authors \hspace*{\fill}\hbox{}}
\def\ps@newpstyle{\def\@oddhead{
\hspace{-0.65em} \vbox{\oddhedr\vskip 1mm \hrule width
\textwidth}
}
\def\@evenhead{
\hspace{-0.65em} \vbox{\evnhedr\vskip 1mm \hrule width
\textwidth}
}\textsc{}
\def\@oddfoot{\footline}
\def\@evenfoot{\@oddfoot}
}
\def\refer
\def\endref{\end{enumerate}\end{small} }
\newtheorem{lemma}{Lemma}
\newtheorem{theorem}{Theorem}
\def\Ker{\mathop{\fam 0 Ker}\nolimits}
\def\End{\mathop{\fam 0 End}\nolimits}
\begin{document}

\setcounter{page}{\firstpage}
\pagestyle{newpstyle}

\sloppy
\rm

\begin{flushright}
UDC~$512.55$
\end{flushright}

\begin{center}\textbf{Hochshild cohomology of the associative conformal
algebra $Cend_{1,x}$}\end{center}

\begin{center}Roman A.~KOZLOV\end{center}
Sobolev Institute of Mathematics, Novosibirsk State University, \\
630090, Novosibirsk, Russia \\
E-mail address: \textbf{KozlovRA.NSU@yandex.ru}
\\
\\
It is established in this work that second Hochshild cohomology group of the
associative conformal algebra $Cend_{1,x}$ is zero. As a corollary, this algebra
split off in each extension with a nilpotent kernel.
\\
\textbf{Key words}: 
\textit{associative conformal algebra, splitting off radical, Hochshild cohomology}.

\section{Introduction and preliminary information}

\subsection{Introduction}

The axiomatic quantum field theory was originated in 50th of the last century in A. Whiteman's articles etc. Fields with a conformal simmetry were considered in [5]. It can be represented as infinite in both side formal power series whose coefficients are linear operators. A key role in the conformal field theory is played by construction of an operator product expansion (OPE) an  algebraical description of which lead us to a vertex operator algebra (VOA) concept. An axiomatic definition of vertex algebra was introduced by R. Borcherds in [6].

V.G. Kac established [see 9] that coefficients of a singular part of the OPE allow us to describe a commutator of formal power series responding for VOA multiplication operators. In case of an algebraical description of these structures there appears a concept of conformal Lie algebras.  For these algebras the structure theory was constructed in [8] and the representations theory respectively in [7]. During the process of studying representations of the finite type there appear natural algebraic structures playing the same role with respect to conformal Lie algebras as common associative algebras to Lie algebras. These structures are named associative conformal algebras [see 7].

Let $ H = \mathbb F[D]$ be a common polynomial algebra over a field  $\mathbb F$ with a zero char, $\mathbb \mathbb Z_+$ are non-negative integers, $D^{(n)}$ is for $\frac{1}{n!} D^n$, $n\in \mathbb Z_+$ and $D^{(n)} = 0$,  $n<0$.

DEFINITION. {\textit{A conformal algebra}} is a left $H$-module $A$ equipped with a countable number of  $\mathbb F$-linear maps
$\circ_n: A\otimes A \rightarrow A$, $n\in \mathbb Z_+ $ satisfying axioms:

\begin{enumerate}
\item$a\circ_n b = 0$ for sufficiently large $n$,
\item$Da\circ_n b = - na\circ_{n-1}b$,
\item$a\circ_nDb = D(a\circ_nb) + na\circ_{n-1}b$,
\end{enumerate}
where $a,b\in A, n\in \mathbb Z_+$.

First axiom allows us to determine a locality function $N$: $A\times A\rightarrow \mathbb Z_+$ by the rule: $N( a, b ) = \min\{n\in \mathbb Z_+ | a\circ_k b = 0 ~\forall k\ge n\}, ~a, b\in A.$

A conformal algebra is called {\textit{associative}} if for every $a, b, c\in A$ holds 
\begin{center} 
$(a\circ_nb)\circ_mc = \sum\limits_{s\ge0}(-1)^s\binom{n}{s}a\circ_{n-s}(b\circ_{m+s}c),~ m,n\in \mathbb Z_+$,
\end{center} 
or equivalent condition
\begin{center} 
$a\circ_n(b\circ_mc) = \sum\limits_{t\ge0}\binom{n}{t}(a\circ_{n-t}b)\circ_{m+t}c,~ m,n\in \mathbb Z_+$.
\end{center}

Construction of simple and semisimple associative algebras of the finite type was described in [8] (see also [12]). It is established in [13] that analogue of the Wedderburn theorem about a splitting radical (in reference to Lie conformal algebras the same statement, which is analogue of the Levi - Maltsev theorem for Lie algebras, is incorrect [4])  is fulfilled for associative conformal algebras of the finite type.

Associative conformal algebras with a faithful representation of the finite type form more wide class in comparison with algebras of the finite type. All such algebras is subalgebras of $Cend_n$ (the algebra of conformal linear maps of free $n$-generated  $\mathbb F[D]$-module)
\[
Cend_n = M_n(\mathbb F[D, x]),  
\]
where $\circ_m: Cend_n\otimes Cend_n \rightarrow Cend_n$ is determined by a common matrix differentiation, i.e. 
$A(x)\circ_m B(x) = A(x)B^{(m)}(x)$.

Simple and semisimple subalgebras of $Cend_n$ were desribed in [10] and there also was proved that every such algebra $C$
has a nilpotent ideal (radical) defining a semisimple quotient with a faithful representation of the finite type.

The radical splitting problem of this class of algebras was considered in [11, see also 1, 2]. There is a definition of Hochschild cohomology for associative conformal algebras in [1, 2] and in a context of pseudoalgebras [3] (slightly different from the definition in [4]). Cohomology play a key role in the radical splitting problem. Specifically, to proof analogue of the Wedderburn radical splitting theorem for a conformal algebra $A = C/R$ it's enough to show that for arbitrary conformal $A$-bimodule $M$ second Hochschild cohomology group $H^2(A,M)$ is zero.

It is shown in [1] that for $A=Cend_{1}$ and for every bimodule $M$ next condition $H^2(A,M)=0$ is fulfilled. Contrariwise, it follows from the example [see 11] that there exist $A$-bimodule $M$ for $A=Cend_{1,x^2} = x^2Cend_1$ such that $H^2(A,M)\ne 0$.

In this paper we consider an intermidiate case: $A=Cend_{1,x} = xCend_1$. 
This associative conformal algebra is the universal enveloping algebra for the simple conformal Virasoro algebra with locality 2. The main result of this paper is next

\textbf{THEOREM.} \textit{ Let $C = Cend_{1,x}$ and let $M$ be arbitrary conformal bimodule over $C$. Then $H^2(C,M)=0$.}

\subsection{Modules over conformal algebras}

DEFINITION. A {\textit{left module}} over an associative conformal algebra $A$ is an $H$-module $V$, closed with respet to the left action by elements of $A$. Specifically, there is determined such linear maps $\circ_n: A\otimes V\rightarrow V,~ n\in \mathbb Z_+$ that for every $a, b\in A$, $v\in V$, $m\in \mathbb Z_+$

(1) $a\circ_n v = 0$ for sufficiently large $n$,

(2) $Da\circ_n v = - na\circ_{n-1}v$,

(3) $a\circ_nDv = D(a\circ_nv) + na\circ_{n-1}v$, 

(4) $(a\circ_mb)\circ_nv = \sum\limits_{s\ge0}(-1)^s\binom{m}{s}a\circ_{m-s}(b\circ_{n+s}v)$.

Hereinafter a left module over an associative conformal algebra $A$ is a simply left $A$-module. A right $A$-module could be defined in the same way.

A {\textit{bimodule}} is a simultaneously left and right $A$-module with one additional axiom:

\begin{center}
$(a\circ_mv)\circ_nb = \sum\limits_{s\ge0}(-1)^s\binom{m}{s}a\circ_{m-s}(v\circ_{n+s}b).$
\end{center}

\section{Main definitions and formulation of the problem}
\subsection{Connection with pseudoalgebras}

An algebra $H = \mathbb F[D]$ is Hopf's algebra with a comultiplication $\Delta: H \rightarrow H\otimes H, ~\Delta(D) = D\otimes1 + 1\otimes D$, a counit $\varepsilon: H \rightarrow \mathbb F,~\varepsilon(D) = 0$ and an antipode $S: H \rightarrow H,~S(D) = - D$ where $\Delta$, $\varepsilon$ and $S$ are homomorhpisms of algebras. Assuming $\Delta_1 = \Delta$ define by induction 
$\Delta_n = (\Delta\otimes id \otimes \dots  \otimes id)\Delta_{n-1}$,~ $n>1$.

Because of conformal algebras are objects of the pseudotensor category of left $H$-modules, it could be constructed by the pseudotensor language.

One can define the right action $H$ on $H^{\otimes n}$ by the rule:
\[
(h_1\otimes h_2 \otimes \dots \otimes h_n) h= (h_1\otimes h_2 \otimes \dots \otimes h_n)\Delta_{n-1}(h).
\]
Then $H^{\otimes n}$ is a right $H$-module.

Let $V$ be a left unital $H$-module endowed with a bilinear operation $*: A\otimes A \rightarrow (H\otimes H)\otimes_H A$. Such $H$-module $A$ is called an $H$-\textit{pseudoalgebra} and the operation $*$ is a \textit{pseudomultiplication}.

Every conformal algebra $A$ is a $H$-pseudoalgebra and next equality 
\[
 a* b 
=\sum\limits_{n\ge0}(-D)^{(n)}\otimes1\otimes_H(a\circ_nb), \quad a,b\in A,
\]
set up a correspondence between the pseudomultiplication $*$ and operations $\circ_n$, $n\in \mathbb Z_+$.

The definition of a module (bimodule) over an associative conformal algebra along with
the associativity property could be represented through pseudomultiplication-like operations, 
see datails in [1,3]. For research purposes we translate main definitions
in [1](a cochain, cocycle and coboundary) from the pseudoalgebras language to the 
$\circ_n$-multiplications language,~$n\in Z_+$.

\subsection{Definition of the Hochschild cohomology group}

DEFINITION. An {\textit{n-cochain}} with coefficients in $V$ is a $H$-polylinear map $\varphi: A^{\otimes n}\rightarrow (H^{\otimes 
n})\otimes_H V$, i.e.

\begin{center}
$\varphi (h_1a_1\otimes\dots \otimes h_na_n) = (h_1\otimes\dots \otimes h_n\otimes_H 
1) \varphi ( a_1\otimes\dots \otimes a_n) .$
\end{center}

Denote by $C^n (A,V)$ a set of every $n$-cochains of the algebra $A$ and with coefficients in the bimodule $V$. In this paper only 1- and 2-cochains are interesting for us. Obviously, an 1-cochain $\tau $ is arbitrary $H$-polylinear map $\tau : A\to V$.
For 2-cochains  we use the natural representation:
\begin{equation}\label{eq: natrepr}
\varphi(a, b) = \sum\limits_{s\ge0}((-D)^{(s)} \otimes1)\otimes_H \varphi_s(a, 
b), \ \text{where}\ \varphi_s(a, b)\in V.
\end{equation}
Thus a 2-cochain $\varphi $ is a collection of bilinear maps $\varphi_s: A\otimes A\to V$, $s\in \mathbb Z_+$, such that 

\begin{enumerate}
\item$\varphi_s(Da,  b) = - s\varphi_{s-1}(a, b)$,
\item$\varphi_s(a, Db) = D\varphi_s(a, b) + s\varphi_{s-1}(a, b)$,
\item$\varphi_s(a, b) = 0$ for sufficiently large $s$.
\end{enumerate}

Similarly, a 3-cochain $\psi $ is a 2-parametric collection of linear maps $\psi_{mn}: A\otimes A\otimes A \to V$, $n,m\in \mathbb Z_+$ with the same properties.

DEFINITION. A {\textit{differential}} is a map $\delta_n: C^n (A,V)\rightarrow  C^{n+1} (A,V)$  given by the rule $(\delta_n \varphi)( a_1,\dots , a_{n+1}) = a_1 * \varphi(a_2, \dots , a_{n+1}) + \sum\limits_{i = 1}^{n} 
(-1)^i\varphi(a_1,\dots , a_i * a_{i+1},\dots , a_{n+1}) +  (-1)^{n+1}\varphi(a_1,\dots , a_n) * a_{n+1}$.

Translating this definition to the language of $\circ_n$-operations, $n\in \mathbb Z_+$, we obtain the differential for the 1-cohain $\tau $
\[
 (\delta_1\tau)_s    (a,b) = \tau(a)\circ_{n} b -\tau(a\circ_{n} b) + a\circ_n \tau(b),
\]
and for the 2-cohain $\varphi$
\[
 (\delta_2\varphi)_{mn}(a,b,c) = a\circ_m\varphi_n(b,c) + \varphi_m(a, b\circ_n c) -
 \]\[
\sum\limits_{s\ge0}^{} (^m_s)(\varphi_{n+s}(a\circ_{m-s}b, c) + \varphi_{m-s}(a, b)\circ_{n+s}c),~a,b,c\in M,~n, m\in Z_+  
\]

Say an n-cochain $\varphi$ is called an {\textit{n-cocycle}} if $\delta_n\varphi = 0$. A cocycle $\varphi\in Z^n(A,V)$ is called an {\textit{n-coboundary}} if there exists an (n-1)-cochain $\psi $ such that $\varphi = \delta_{n-1}\psi$.
Denote by $Z^n ( A,V)$ a set of every n-cocycles and by $B^n (A,V)$ a set of every n-coboundaries.

DEFINITION. The {\textit{Hochschild cohomology group}} of an associative conformal algebra $A$ on an $A$-bimodule $V$ is $H^n (A,V) = Z^n ( A,V) /B^n (A,V)$.

\subsection{Extensions. Connection with second cohomology group}
 
DEFINITION. An {\textit{extension}} of a conformal algebra $A$ is an ordered pair $(B, \sigma)$ where $B$ is a conformal algebra and 
$\sigma$ is a homomorphism $B$ to $A$.

Any extension can be identified with an exact sequence
\[
 0 \rightarrow \Ker\sigma \rightarrow B \rightarrow A \rightarrow 0.
\]

The conformal algebra $A$ is called {\textit{splitting off}} in the extension $(B, \sigma)$ if $B = A'\otimes \Ker\sigma$ where $A'\subseteq B$ is a subalgebra isomorfic to $A$.

The extension $B$ called singular if $(\Ker\sigma \circ_n \Ker\sigma) = 0$ for every $n\in \mathbb Z_+$.

Let $M$ be a bimodule over the conformal algebra $A$ and $\varphi \in C^2(A, M)$. Construct a singular extension of the algebra $A$ by the bimodule $M$. Let $B$ be equal to the $H$-modules direct sum $A\oplus M$. The operation $*$ can be defined by the rule
\[
(a_1 + u_1) * (a_2 + u_2) = a_1 * a_2 + a_1 * u_2 + u_1 * a_2 + \varphi(a_1, a_2),
\]
where  $(a_1 + u_1), (a_2 + u_2)\in B$. Then $B$ is a conformal algebra. Denote constructed extension by $(A; M, \varphi)$.

Next results are proved in [1]:

\textbf{LEMMA.~~}\textit{ If $\varphi \in Z^2(A, M)$, then $B = (A; M, \varphi)$ is an associative conformal algebra.}

\textbf{THEOREM.~~}\textit{ A conformal algebra $A$ is splitted off in a singular extension $(B, \sigma)$ if and only if a cocycle
 $\varphi$ is trivial in $H^2(A, \Ker\sigma)$.}

\subsection{Formulation of the problem}

Earlier we determined the conformal algebra $Cend_n,~n\ge1$. Here we consider case $n=1$. The conformal subalgebra $Cend_{1,x} = xCend_1 \subset Cend_1$ is a free $H$-module with a basis given by elements 
$x^k = x\circ_0 x\circ_0 \dots \circ_0 x$ ($k$ times), $k \ge 1$.

A question has been investigated in the paper sounds in this way: do the conformal algebra $Cend_{1,x}$ split off in every singular extension?
In other words, whether we can construct for every exact sequence 
\[
 0 \rightarrow M \rightarrow C \rightarrow Cend_{1,x} \rightarrow 0,
\]
(where $M^2=0$ in sense of $\circ$-multiplication) such cross-section $Cend_{1,x}\to C$ which is a conformal algebras homomorphism.

\section{Main result}

As it is shown in 2.3 the splitting off problem for the conformal algebra $Cend_{1,x}$ in a singular extension with the nilpotent kernel
$M$ is in a tough connection with algebra's second Hochschild cohomology group with coefficients in $M$. 

The main result of the paper is next
  
\begin{theorem}\label{thm:Theorem1}
Let $Cend_{1,x}$ be the associative conformal algebra described back in the text, $M$ is a bimodule over it. Then $H^2(Cend_{1,x}, M) = 0$.
\end{theorem}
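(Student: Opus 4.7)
Let $\varphi \in Z^2(C,M)$ be an arbitrary 2-cocycle; the goal is to exhibit $\tau \in C^1(C,M)$ with $\varphi = \delta_1\tau$. The key structural facts I will use are that $C$ is a free $H$-module with basis $\{x^k\}_{k\ge 1}$, that $x^{k+1}=x\circ_0 x^k$, and that the $\circ_n$-products in $C$ are highly constrained: on basis elements one has $x^i\circ_0 x^j = x^{i+j}$ and $x^i\circ_n x^j = 0$ for $n\ge 1$ (the $H$-action introduces explicit polynomial factors in $D$). Both $\varphi$ and $\tau$ are determined by their values on basis elements through $H$-linearity.

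The first step is to exploit the cocycle identity $(\delta_2\varphi)_{0,n}(x,x^k,x^j)=0$. Since $\binom{0}{s}=\delta_{s,0}$, this collapses to
\[
\varphi_n(x^{k+1},x^j)=x\circ_0\varphi_n(x^k,x^j)+\varphi_0(x,x^k\circ_n x^j)-\varphi_0(x,x^k)\circ_n x^j.
\]
By induction on $k$ every $\varphi_n(x^i,x^j)$ is reconstructed from the values $\varphi_n(x,x^j)$ together with the seed $\varphi_0(x,\cdot)$; a symmetric reduction handles the second argument via $(\delta_2\varphi)_{m,0}(x^i,x,x^j)=0$. The cocycle is thus encoded by the restricted family $\{\varphi_n(x,x^j)\}_{n\ge 0,\ j\ge 1}$.

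Second, I build $\tau$ directly. Choose $\tau(x)\in M$ (kept free at this stage) and define recursively
\[
\tau(x^{k+1}) := \tau(x)\circ_0 x^k + x\circ_0 \tau(x^k) - \varphi_0(x,x^k), \qquad k\ge 1,
\]
extended $H$-linearly. By construction $(\delta_1\tau)_0(x,x^k)=\varphi_0(x,x^k)$ for every $k$, and by step~1 the identity $(\delta_1\tau)_0=\varphi_0$ then propagates to all of $C\otimes C$. Replacing $\varphi$ by $\varphi-\delta_1\tau$, we may henceforth assume $\varphi_0\equiv 0$, whereupon the step-1 recursion simplifies to $\varphi_n(x^{k+1},x^j)=x\circ_0\varphi_n(x^k,x^j)$.

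The third and hardest step is the verification $(\delta_1\tau)_n(x,x^j)=\varphi_n(x,x^j)$ for $n\ge 1$ and $j\ge 1$. Because $x\circ_n x^j = 0$ for $n\ge 1$, this amounts to
\[
\varphi_n(x,x^j) = \tau(x)\circ_n x^j + x\circ_n \tau(x^j).
\]
I will proceed by induction on $j$. For the inductive step I substitute the recursive formula for $\tau(x^{j+1})$ into the right-hand side, expand via the bimodule associativity identities $a\circ_k(v\circ_0 b)=\sum_t\binom{k}{t}(a\circ_{k-t}v)\circ_t b$ and its right-module analogue, and compare with the recursion for $\varphi_n(x,x^{j+1})$ obtained from $(\delta_2\varphi)_{n,0}(x,x,x^j)=0$; here the vanishing $x\circ_s x = 0$ for $s\ge 1$ kills most terms, leaving exactly the relation needed. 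The base case $j=1$ reduces to the finite system $\varphi_n(x,x)=\tau(x)\circ_n x + x\circ_n\tau(x)$ for $n\ge 1$ (finite by the locality axiom on $\varphi$), which is to be solved for $\tau(x)\in M$.

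I expect the main obstacle to be this base case: producing a single $\tau(x)\in M$ that simultaneously absorbs every $\varphi_n(x,x)$. The consistency conditions come from $(\delta_2\varphi)(x,x,x)=0$ and the $H$-compatibility axioms $\varphi_s(Dx,x)=-s\varphi_{s-1}(x,x)$, $\varphi_s(x,Dx)=D\varphi_s(x,x)+s\varphi_{s-1}(x,x)$, and must be exploited together with the explicit $\circ_n$-action on $M$. This is precisely the delicate step that fails for $Cend_{1,x^2}$, and its success for $Cend_{1,x}$ rests on the sharper locality $x\circ_n x=0$ for $n\ge 1$.
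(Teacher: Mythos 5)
Your plan founders on two points, one factual and one structural. The factual one: in $Cend_{1,x}$ the products $x^i\circ_n x^j$ do \emph{not} vanish for all $n\ge 1$. Since $A(x)\circ_m B(x)=A(x)B^{(m)}(x)$, one has $x\circ_1 x=x$ and $x\circ_1 x^j=jx^j$; the locality of $x$ with itself is $2$, not $1$ (indeed $Cend_{1,x}$ is the universal envelope of the Virasoro conformal algebra with locality $2$). This breaks your step 3 as written: the identity you need is $\varphi_n(x,x^j)=\tau(x)\circ_n x^j+x\circ_n\tau(x^j)-\tau(x\circ_n x^j)$, and the extra term $-j\,\tau(x^j)$ at $n=1$ is essential --- it is exactly the $-\psi_1$ in the paper's target equation $\varphi_1(x,x)=x\circ_1\psi_1-\psi_1+\psi_1\circ_1 x$. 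The relation $x\circ_1x=x$ also generates the operator identities $L_1L_0=L_0+L_0L_1$ and $R_1^2=R_1$ on which the whole argument turns; assuming it away removes the very structure that makes the theorem true (and that distinguishes this case from $Cend_{1,x^2}$, where the answer is different for the opposite reason to the one you suggest).

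The structural gap: you correctly identify the base case --- producing a single $\tau(x)\in M$ absorbing the values $\varphi_n(x,x)$ --- as the crux, but you do not prove it, and it is not a routine consistency check. In the paper this occupies Lemmas 4--6 and the final section: one derives the identity $L_0^m\varphi_{m+1}=(L_1-m)\cdots(L_1-2)L_1\varphi_1-\sum_{s=1}^{m}(L_1-m)\cdots(L_1-(s+1))L_0^{s-1}R_s\varphi_1$, shows via $L_0^{k-1}L_k=L_1(L_1-1)\cdots(L_1-(k-1))$ and locality that $L_1$ acts semisimply with eigenvalues in $\mathbb Z_+$ on the finite-dimensional module $A\varphi_1$, uses the idempotent $R_1$ to split that module into $V^{(0)}\oplus V^{(1)}$, and then inverts $L_1+R_1-1$ on the relevant eigenspaces to solve for $\psi_1$. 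Nothing in your proposal substitutes for this; saying that ``the consistency conditions come from $(\delta_2\varphi)(x,x,x)=0$'' locates the problem without solving it. A secondary remark: the paper only needs to normalize $\varphi_1(x,x)$ and $\varphi_0(x,x^l)$, because its Lemma 2 shows the remaining $\varphi_m(x,x)$ then vanish automatically (again by an eigenvalue argument); your plan demands a $\tau(x)$ killing all $\varphi_n(x,x)$ simultaneously, which is a strictly stronger requirement and would need its own justification.
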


\begin{proof}
We prove the theorem in several steps.

\begin{lemma}\label{lem:Lemma1}
Arbitrary 2-cocycle $\varphi\in Z^2(Cend_{1,x}, M)$ is completely determined by elements $\varphi_t(x, x)$ and $\varphi_0(x, x^l)$, $t, l\ge 1$ laying down in the bimodule $M$. 
\end{lemma}

\begin{proof}
Use the 2-cocycle definition:
\begin{equation}\label{eq: cocycle}
x^k\circ_n\varphi_m(x^l,x^q) + \varphi_n(x^k, x^l\circ_m x^q) =
\sum\limits_{s\ge0}^{} \binom{n}{s}(\varphi_{m+s}(x^k\circ_{n-s}x^l, x^q) + \varphi_{n-s}(x^k, x^l)\circ_{m+s}x^q),
\end{equation}
where $n, m\in \mathbb Z_+$, $k,l,q \ge1 $.

For $n=0$, $k=1$:

\begin{equation}
\varphi_m(x^{l+1},x^q) = x\circ_0\varphi_m(x^l, x^q) + m!\binom{q}{m}\varphi_0(x, 
x^{l+q-m}) - \varphi_0(x, x^l)\circ_m x^q.
\end{equation}

Use induction and express every values of the cocycle through $\varphi_m(x,x^q)$, $m\ge0$, $q\ge 1$.

Consider \eqref{eq: cocycle} for $m=0$, $k=l=1$:
\begin{equation}
x\circ_n\varphi_0(x,x^q) + \varphi_n(x,x^{q+1}) = \varphi_n(x^2, x^q) + 
n\varphi_{n-1}(x, x^q) - \sum\limits_{s\ge0}^{}\binom{n}{s}\varphi_{n-s}(x, x)\circ_s x^q.
\end{equation}

From (3) one can see that $\varphi_n(x^2,x^q) = x\circ_0\varphi_n(x, x^q) + 
n!\binom{q}{n}\varphi_0(x, x^{q-n+1}) - \varphi_0(x, x)\circ_n x^q$. We substitute the result into expression (4):

\begin{center}
$\varphi_n(x,x^{q+1}) = x\circ_0\varphi_n(x, x^q) + n!\binom{q}{n}\varphi_0(x, x^{q-n+1}) 
- \varphi_0(x, x)\circ_n x^q + n\varphi_{n-1}(x, x^q) - 
\sum\limits_{s\ge0}^{} \binom{n}{s}\varphi_{n-s}(x, x)\circ_s x^q - x\circ_n\varphi_0(x,x^q).$
\end{center} 

Assuming $n=1$ and using induction on $q$ we obtain that $\varphi_1(x,x^{q+1})$ is expressed through $\varphi_t(x, x)$ and $\varphi_0(x, x^l)$ for  $q,l\ge1,~q\ge l$. Applying sequently induction on $n$ and on $q$ we obtain the same expression for each $\varphi_n(x,x^{q+1})$, $n,q\ge1$. As required.
\end{proof}

\begin{lemma}\label{lem:Lemma2}
Let $\varphi_1(x, x) = 0$ and $\varphi_0(x, x^l) = 0$ for every $l\ge1$. 
Then $\varphi = 0$.
\end{lemma}

\begin{proof}

Due to Lemma 1 it is enough to prove that $\varphi_1(x, x) = 0 $ implies $\varphi_l(x, x) = 0$.

Hereinafter designate $\varphi_m(x, x) = \varphi_m$, $(x \circ_m \cdot) = L_m\in \End (M)$, 
$(\cdot \circ_m x) = R_m\in \End (M)$.

Applying associativity identities we've rewritten useful formulas in the new designation: 

\begin{equation}\label{eq: perest1}
L_nL_m(u) = \sum\limits_{s=0}^{n} (_s^n)((x\circ_{n-s}x)\circ_{m+s} u) = 
L_0L_{m+n}(u) + nL_{m+n-1}(u),
\end{equation}

\begin{equation}\label{eq: perest2}
R_nR_m(u) = \sum\limits_{s=0}^{m} (-1)^s (_s^m)(u \circ_{m-s} (x\circ_{n+s} x)) = 
\begin{cases}
R_m(u), & n=1,\\
0, & n>1,
\end{cases}
\end{equation}

\begin{equation}\label{eq: perest3}
R_nL_m(u) = \sum\limits_{s=0}^{m} (-1)^s (_s^m)(x\circ_{m-s} (u\circ_{n+s} x)) 
= \sum\limits_{s=0}^{m} (-1)^s (_s^m)L_{m-s}R_{n+s}(u),
\end{equation}
for $u\in M$.

Using 2-cocycle identity \eqref{eq: cocycle} obtain following relation:
\begin{center}
$(x\circ_1\varphi_m(x, x) - x\circ_0\varphi_{m+1}(x, x)) - \varphi_m(x \circ_1 
x, x) + (\varphi_1(x, x\circ_mx) - \varphi_0(x, x\circ_{m+1}x)) - \varphi_1(x, x)\circ_mx = 0$
\end{center}

Since the $\circ$-multiplication act on $x^k\in Cend_{1, x}$ like a differentiation we obtain equalities:
\begin{equation}\label{eq: sootn1}
x\circ_0\varphi_2 = x\circ_1\varphi_1 - \varphi_1\circ_1x,
\end{equation}
\begin{equation}\label{eq: sootn2}
x\circ_0\varphi_{m+1} = x\circ_1\varphi_m - \varphi_m - \varphi_1\circ_mx,~m>1.
\end{equation}

An assumption $\varphi_1 = 0$ implies there are neither right part in \eqref{eq: sootn1} nor last summand in \eqref{eq: sootn2}.

For $n=1$, $m=0$ formula \eqref{eq: perest1} gives 
$L_1L_0 = ((x\circ_1x)\circ_0\cdot) + ((x\circ_0x)\circ_1\cdot) = L_0 + L_0L_1$. Applying it to \eqref{eq: sootn2} obtain next equality
\begin{center}
$L_0^{m-2}\varphi_m = (L_1 - (m-2))~\dots ~(L_1 - 1)\varphi_2$.
\end{center}

The locality axiom implies there is such $N$ that $\varphi_m = 0$ for each $m>N$.

Let $N>1$. Then for $m = N+1$:~$(L_1 - (N-1))~\dots ~(L_1 - 1)\varphi_2 = 0$.

Denoting $(L_1 - (N-2)) \dots (L_1 - 1)\varphi_2 =  \prod\limits_{l=1}^{N-2}(L_1 - l)\varphi_2= \chi_1 \in M$ we get a new
equalities system:
\begin{equation}
\begin{cases}
L_1\chi_1 =x\circ_1\chi_1 = (N-1)\chi_1,
\\
L_0\chi_1 = x\circ_0\chi_1 = 0.
\end{cases}
\end{equation}
Second equality is obtained from \eqref{eq: sootn1} by the sequently left multiplication on $(L_1-2),\dots ,(L_1-(N-1))$ and with the condition $(L_1-1)L_0 = L_0L_1$.

Show the system implies $\chi_1 = 0$. Indeed, using \eqref{eq: perest1}, the associativity identity and system (10):

$0 = (x\circ_2x)\circ_0\chi_1 = L_2L_0\chi_1 - 2L_1L_1\chi_1 + L_0L_2\chi_1 = 
L_0L_2\chi_1 - 2L_1L_1\chi_1 \Rightarrow  L_0L_2\chi_1 = 2L_1L_1\chi_1$
$(N-1)^2\chi_1 = L_1L_1\chi_1 = (x\circ_1x)\circ_1\chi_1 + 
(x\circ_0x)\circ_2\chi_1 = L_1\chi_1 + L_0L_2\chi_1 
= (N-1)\chi_1 +2(N-1)^2\chi_1 \Rightarrow 0 = ((N-1) + (N-1)^2)\chi_1 = N(N-1)\chi_1$.

As long as $N>1$ last element in the chain of equalities is zero if only $\chi_1 = 0$.

Let $\chi_{k-1} = \prod\limits_{l=1}^{N-k}(L_1 - l)\varphi_2 = 
(L_1 - (N-k))\chi_k$, $k=2,\dots,N~(\chi_{N-1} = \varphi_2)$. 
Suppose reckoning above as the base of induction. Let $\chi_{k-1} = 0$ for certain $k$, $1<k<N$. Then
\begin{equation}
\begin{cases}
L_1\chi_k = (N-k)\chi_k,
\\
L_0\chi_1 = 0.
\end{cases}
\end{equation}

By similar actions one can obtain
\[
0 = (N-(k-1))(N-k)\chi_k = 
(N-(k-1))(N-k)\prod\limits_{l=1}^{N-(k+1)}(L_1 - l)\varphi_2.
\]
 hence $\chi_k = 0$.

Therefore $\chi_k = 0$ for $k=1,\dots,N-1$ and particularly $\chi_{N-1} = \varphi_2 = 0$. 

Now, with the condition $\varphi_2 = 0$ we have \eqref{eq: sootn2} rewritten in next way
\[
L_0\varphi_3 = 0, \quad L_0\varphi_{m+1} = (L_1-1)\varphi_m
\]
and with the same reflections we get $\varphi_3 = 0$ and so on until $\varphi_{N+1} = 0$ by the locality axiom. This is it, $\varphi_m = 0$ for every $m\in \mathbb Z_+$.
\end{proof}

\begin{lemma}\label{lem:Lemma3}
Consider a cocycle $\varphi \in Z^2(Cend_{1,x}, M)$. Assume exists such $\psi_1\in M$ that
\begin{equation}\label{eq: diff1}
 \varphi_1(x, x) = x\circ_1\psi_1 - \psi_1 + \psi_1\circ_1x.
\end{equation}
Then $\varphi \in B^2(Cend_{1,x}, M)$.
\end{lemma}

\begin{proof}
Define by induction $\psi_l$, $l\ge2$
\begin{equation}\label{eq: diff2}
\psi_{l+1} = - \varphi_0(x, x^l) +  x\circ_0\psi_l + \psi_1\circ_0x^l, \quad l\ge 1.
\end{equation}

Construct a 1-cochain with values $\psi(x^l) = \psi_l$. Consider $\delta\psi = \varphi' \in B^2(Cend_{1,x}, M)$. 
Then \eqref{eq: diff1} implies $\varphi_1'(x, x) = \varphi_1(x, x)$ and from \eqref{eq: diff2} follows  
$\varphi_0'(x, x^l) = \varphi_0(x, x^l)$.

Denote $\varphi - \varphi' = \pi \in Z^2(Cend_{1,x}, M)$. Then $\pi_1(x, x) = 0$ by the hipothesis, $\pi_0(x, x^l) = 0$ by the choice of $\psi_l$ and then regarding Lemma \ref{lem:Lemma2} we have $\pi = 0 $, i.e. $\varphi = \delta \psi$ as required.
\end{proof}

It is remained to show existance of the element $\psi_1$.

Firstly we derive a formula for $L^m_0\varphi_{m+1},~m\ge1$.

\begin{lemma}\label{lem:Lemma4}
If $\varphi \in Z^2(Cend_{1,x}, M)$ then
\begin{equation}\label{eq: dlinshtuka}
L_0^m\varphi_{m+1} = (L_1 - m)~\dots ~(L_1 - 2)L_1\varphi_1 - 
\sum\limits_{s=1}^{m}(L_1 - m)~\dots ~(L_1 - (s+1))L_0^{s-1}R_s\varphi_1.
\end{equation}
\end{lemma}

\begin{proof}
By induction on $m$. Using \eqref{eq: sootn1} for $m=1$ we obtain $L_0\varphi_2 = (L_1 - R_1)\varphi_1$. 
Then it is multiplied on the left by $(L_1 - 2)$. Applying \eqref{eq: perest1}: 
\[
 (L_1-2)(L_1-R_1)\varphi_1 = (L_1 - 2)L_0\varphi_2 = L_0(L_1-1)\varphi_2. 
\]
Substitute the result in \eqref{eq: sootn2}:
\[
L_0^2\varphi_3 = (L_1-2)(L_1-R_1)\varphi_1 - L_0R_2\varphi_1.
\]

$m \rightarrow m+1$: Let the formula be true for $L_0^{m-1}\varphi_m$. Then
\begin{multline}\nonumber
L_0^m\varphi_{m+1} = L_0^{m-1}(L_0\varphi_{m+1}) = L_0^{m-1}((L_1 - 1)\varphi_m 
- R_m\varphi_1) \\= (L_1 - m)L_0^{m-1}\varphi_m - L_0^{m-1}R_m\varphi_1 = 
(L_1 - m)((L_1 - (m-1)) \dots (L_1 - 2)L_1\varphi_1 \\ - \sum\limits_{s=1}^{m-1}(L_1 - (m-1))\dots (L_1 - 
(s+1))L_0^{s-1}R_s\varphi_1) - L_0^{m-1}R_m\varphi_1 \\ =  (L_1 - m) \dots (L_1 - 2)L_1\varphi_1 - 
\sum\limits_{s=1}^{m}(L_1 - m)\dots (L_1 - (s+1))L_0^{s-1}R_s\varphi_1 ,
\end{multline}
As required.
\end{proof}

Consider in this step multiplications $L_0^{k-1}L_k$. By induction on $k$ we will show that the equality 
\begin{equation}\label{eq: charf}
L_0^{k-1}L_k = L_1(L_1 - 1)~\dots ~(L_1 - (k-1))
\end{equation}
is fulfilled.

Using \eqref{eq: perest1} obtain

$k=2:~L_0L_2 = L_1^2 - L_1 = L_1(L_1 - 1)$;

$k\rightarrow k+1:~(L_1 - k)L_0^{k-1}L_k = L_0^{k-1}(L_1 - 1)L_k = -L_0^{k-1}L_k 
+ L_0^{k-1}(L_k + L_0L_{k+1}) = L_0^kL_{k+1}$.

By the hypothesis the formula is correct for $L_0^{k-1}L_k$. Obviously, $(L_1 - k)$ commutates with every elements of the same form.
So \eqref{eq: charf} is correct.

Now it's easy to prove next result.

\begin{lemma}\label{eq:Lemma5}
Let $V\subset M$ be a finite-dimensional space invariant with respect to $L_1$.Then there exists a decomposition in the direct sum of 
$L_1$-invariant subspaces $V = V_0\oplus V_1\oplus \dots \oplus V_k$ where $L_1 v = iv$ for $v\in V_i$.
\end{lemma}

\begin{proof}
Since $\dim V<\infty$ there exists only finite number of the basis elements $v_1,\dots ,v_n$.
Formula \eqref{eq: charf} for each $v_i $ gives:
\[
L_0^{k}L_{k+1}v_i = L_1(L_1 - 1) \dots (L_1 - k)v_i.
\]
The left part of the equality is zero for sufficiently large $k$ due to locality. Choose $k=\max(k_1,\dots ,k_n)$. Then 
$P_k = t(t - 1)\dots (t - k)$ is such polynomial that $P_k(L_1)V = 0$, i.e. $P_k(L_1)$ is the annihilator of the space $V$. This polynomial has no multiple roots thus the minimal polinomial of the operator $L_1$ on $V$ has no multiple roots likewise, i.e. $L_1$ is semisimple and every eigenvalues are laying down into $\mathbb Z_+$.

Then required decomposition $V = V_0\oplus V_1\oplus~\dots ~\oplus V_k$ is exists but generally speaking there might be zero spaces in the direct sum. 
\end{proof}

Consider the algebra of endomorphisms $\End M$ and subalgebra generated by operators $L_1$ and $L_0^mR_{m+1}$, $m\ge 0$:
\[
 A = \text{sub}\, \langle L_1, L_0^mR_{m+1} \mid m\ge 0 \rangle.
\] 

\begin{lemma}\label{lem:Lemma6}
Let $W = Au = \{au \mid a\in A\}$ be an $A$-module for arbitrary $u\in M$. Then $\dim W<\infty$.
\end{lemma}

\begin{proof}
It's easy to see that $W\subseteq W'$=span$\langle L_0^sL_1^mR_{n+s},~m,s\ge0,
~n\ge1\rangle$.
Indeed, it follows from \eqref{eq: perest1}, \eqref{eq: perest2}, \eqref{eq: perest3} 
that $W'$ is closed with respect to the action of generators of $A$. Let us show 
that $\dim W'<\infty$.

Firstly notice that the locality axiom implies $\dim V'<\infty$ where $V'$=span$\langle R_{n+s}(u)
,~s\ge0,~n>0\rangle$. After,
\[
L_1^m = L_1 + F(L_0L_2,\dots ,L_0^{m-2}L_{m-1}) + L_0^{m-1}L_m, 
\]
where $F$ is a linear function. Actually, using \eqref{eq: perest1} obtain 
$L_1^2 = L_1 + L_0L_2, L_1^3 = L_1(L_1^2) = L_1(L_1 + L_0L_2) = L_1^2 + L_1L_0L_2 = 
L_1 + L_0L_2 + L_0(L_1 + 1)L_2 = L_1 + 2L_0L_2 + L_0L_1L_2 = 
L_1 + 2L_0L_2 + L_0L_2 + L_0^2L_3 = L_1 + 3L_0L_2 + L_0^2L_3$ and so on. Hence elements
$L_1V'$ generate the same linear subspace as $L_0L_{k+1}V'$ which are finite dimensional
by virtue of the locality axiom. 
\end{proof}

For end of the proof of the theorem consider the algebra $A\subseteq \End(M)$ defined above and the $A$-module 
$V = A\varphi_1$. 

Let $\varphi_1\in M$ be a value of a 2-cocycle $\varphi$ and it fulfill the formula \eqref{eq: dlinshtuka}. For sufficiently large $m$ it turns to
\begin{equation}\label{eq: zanulili}
(L_1 - m)~\dots ~(L_1 - 2)L_1\varphi_1 = \sum\limits_{s=1}^{m}(L_1 - m)~\dots ~(L_1 - 
(s+1))L_0^{s-1}R_s\varphi_1
\end{equation}

As it shown back in the proof the operator $L_1$ divides the algebra $V$ on the finite number of invariant subspaces hence 
\[
\varphi_1 = v_0 + \dots  + v_m,
\]
where $L_1v_i = iv_i$.
Substitute that decomposition in \eqref{eq: zanulili}. Since there is a collection of nilpotent operators in the left part of the formula in result only $v_1$ remains:

\begin{center}
$\mp (m-1)! v_1 = \sum\limits_{s=1}^{m}(L_1 - m)~\dots ~(L_1 - (s+1))L_0^{s-2}R_{s-1}\varphi_1$
\end{center}

Use the operator $R_k$, $k\ge2$ to both part of the equality and obtain 
$R_k v_1 = Const\cdot R_k(\sum\limits_{s=2}^{m}(L_1 - m) \dots (L_1 - s)L_0^{s-2}R_{s-1} - L_0^{m-1}R_{m})\varphi_1$.

It follows from \eqref{eq: perest2} and \eqref{eq: perest3} that
$R_kv_1 = 0$, for $k\ge2$.

Initially find such $z\in V$ that $(L_1 - 1)z = v_0 + v_2 + v_3 \dots  + v_m = \varphi_1 - v_1$. It exists because $(L_1 - 1)$ annihilate only eigenspace $V_1$ and is non-degenerate in other. Choose a 1-cochain $\xi$ such
that its value in a bimodule $M$ is $z$, i.e. $\xi(x) = z$. 
Denote $\varphi' = \varphi - \delta\xi \in Z^2(Cend_{1,x}, M)$. Since \eqref{eq: perest2}, \eqref{eq: perest3} the cocycle $\varphi'$ has a property:
\[
R_k\varphi'_1 = R_k(\varphi_1 - (L_1 + R_1 - 1)z) = R_k (\varphi_1 - (L_1 - 1)z 
- R_1z) = R_k (v_1 - R_1z) = 0
\]
for $k\ge2$.

Without loss of generality one can assume \begin{equation}R_k\varphi_1 = 
0,~k\ge2.\end{equation}

Applying \eqref{eq: perest1}, \eqref{eq: perest2}, \eqref{eq: perest3} to a decomposition of $a\varphi_1\in A\varphi_1$, $a\in A$ we gather all operators $R_k$ at the right. Obtain $V = A\varphi_1 \subseteq \mathbb F[L_1,L_0] R_1\varphi_1$. It's easy to check using \eqref{eq: perest3}, (17) that $R_k V = 0$ for each $ k \ge 2$.

Relying on this result, show there is $\psi_1$ satisfying the equality $(L_1 + R_1 - 1)\psi_1 = \varphi_1$. 

Notice the identity $[L_k, R_1] = 0$, $k\ge 0$ is fulfilled in $\End (V)$ ($[\cdot,\cdot]$ is a common commutator).
Indeed, for $k=0$ endomorphisms commutate. For other $k$ it follows because of $R_kV = 0$:
\begin{multline}\label{eq: tozhdestvo}
[L_k,R_1]v = x\circ_k(v\circ_1 x) - (x\circ_k v)\circ_1 x \\
= x\circ_k(v\circ_1 x) - \sum\limits_{l=0}^{k}(-1)^l \binom{k}{l}x\circ_{k-l}(v\circ_{1+l}x) \\
= x\circ_k(v\circ_1 x) - x\circ_k(v\circ_1 x) +\sum\limits_{l=1}^{k}(-1)^{l-1}\binom{k}{l}L_{k-l}R_{1+l}v = 0.
\end{multline}

Also \eqref{eq: perest2} implies that $R_1$ acts on $M$ as the projection operator, i.e. $R_1^2 = R_1$. Hence the space $V$ is decomposed in the direct sum of the kernel $V^{(0)}$ and fixed points $V^{(1)}$. It follows from \eqref{eq: tozhdestvo} that $V^{(0)}$ and $V^{(1)}$ are closed with respect to the action of $L_1$.

Having the decomposition $\varphi_1 = \varphi_1^{(0)} + \varphi_1^{(1)}$ it's enough to show there exists such 
$\psi^{(0)},~\psi^{(1)}$ that 
\begin{equation}
\begin{gathered}
(L_1 + R_1 - 1)\psi^{(0)} = (L_1 - 1)\psi^{(0)} = \varphi^{(0)}_1,
\\
(L_1 + R_1 - 1)\psi^{(1)} = L_1\psi^{(1)} = \varphi^{(1)}_1.
\end{gathered}
\end{equation}

Values of the cocycle $\varphi_1$ satisfies to \eqref{eq: dlinshtuka} and (19) hence

\begin{multline}\nonumber
0 = (L_1 - m)\dots (L_1 - 2)(L_1 - R_1)\varphi_1 \\
 =  (L_1 - m) \dots (L_1 - 2)(L_1 - R_1)\varphi_1^{(0)} +  (L_1 - m)\dots (L_1 - 2)(L_1 - R_1)\varphi_1^{(1)} \\
 = (L_1 - m) \dots (L_1 - 2)L_1\varphi_1^{(0)} + (L_1 - m)\dots (L_1 - 2)(L_1 - 1)\varphi_1^{(1)}.
\end{multline} 

First summand is the element of $V^{(0)}$, second summand is the element of $V^{(1)}$ and the sum might be zero if and only if each summand is zero:
\[
 \begin{gathered}
 (L_1 - m) \dots (L_1 - 2)L_1\varphi_1^{(0)} =0,\\
 (L_1 - m)\dots (L_1 - 2)(L_1 - 1)\varphi_1^{(1)} =0.
 \end{gathered}
\]
Therefore $\varphi_1^{(0)} \in V_0\oplus V_2\oplus \dots \oplus V_k$ and the operator $L_1-1$ is non-degenerate in this direct sum of spaces. It implies there exists such $\psi^{(0)}\in V$ that $(L_1 - 1)\psi^{(0)} = \varphi^{(0)}_1$.

By the same way, $\varphi_1^{(1)}\in V_1\oplus \dots \oplus V_k$ and $L_1$  is non-degenerate there and hence exists $\psi^{(1)}\in V$ such that $L_1\psi^{(1)} = \varphi^{(1)}_1$. 

Hereby it is shown existance of the element $\psi_1=\psi^{(0)}+\psi^{(1)}$ which is required for the Lemma \ref{lem:Lemma3}. The theorem is proved.
\end{proof}

\begin{center}
Bibliography:
\end{center}

\begin{enumerate}
\item\textit{{Dolguntseva I.~A.}} Hochschild cohomologies for associative conformal algebras // Algebra and logic. 2007. 
V. 46, N.6. P. 688–706.

\item\textit{{Dolguntseva I.~A.}} Triviality of the second cohomology group of the conformal algebras  $Cend_n$ and  $Cur_n$ //  St. Petersburg Math. J. 2010. V.21, P. 53-63. 

\item\textit{Bakalov B., D'Andrea A., Kac V. G.} Theory of finite pseudoalgebras 
// Adv. Math. 2001. V.162, N.1. P. 1-140.

\item\textit{Bakalov B., Kac V. G., Voronov A.} Cohomology of conformal algebras 
// Comm. Math. Phys. 1999. V. 200, N.3. P.561-598.

\item\textit{Belavin A. A., Polyakov A. M., Zamolodchikov A. B.} Infinite 
conformal symmetry in two-dimansional quantum field theory // Nucl. Phis. B. 
1984. V. 241. P. 333-380.

\item\textit{Borcherds R. E.} Vertex algebras, Kac - Moody algebras, and the 
Monster // Proc. Nat. Acad. Sci. U.S.A. 1986. V. 83. P. 3068-3071.

\item\textit{Cheng S.-J., Kac V. G.} Conformal modules // Asian J. Math. 1997. 
V. 1. P. 181-193.

\item\textit{D'Andrea A., Kac V. G.} Structure theory of finite conformal 
alhebras // Selecta Math. New. Ser.1998. V. 4. P. 377-418.

\item\textit{Kac V. G.} Vertex algebras for beginners. Second edition. 
Providence, RI: AMS,1998. (University Lecture Series, vol. 10).

\item\textit{Kolesnikov P. S.} Associative conformal algebras with finite 
faithful representation // Adv. Math. 2006. V. 202, N.1. P. 602-637.

\item\textit{Kolesnikov P. S.} On the Wedderburn principal theorem in conformal 
algebras // Journal of Algebra and Its Applications.
 2007. V. 6, N.1. P. 119-134.
 
\item\textit{Zelmanov E. I.} On the structure of conformal algebras // Proc. / 
Intern. Conf. on Combinatorial and Computational Algebra. Hong Kong, China. May 
24-29, 1999. Providence, RI: AMS, 2000. P. 139-153. (Cont. Math., vol. 264).

\item\textit{Zelmanov E. I.} Idempotentns in conformal algebras // Proc. / Third 
Intern. Alg. Conf. Tainan, Taiwan. June 16-July 1, 2002. Ed. by Y. Fong et al. 
Dordrecht: Kluwen Acad. Publ., 2003. P. 257-266.
\end{enumerate}

\end{document}